\theoremstyle{plain}
\newtheorem{theorem}{Theorem}
\newtheorem{lemma}{Lemma}
\newtheorem{proposition}{Proposition}
\theoremstyle{definition}
\theoremstyle{remark}
\numberwithin{equation}{section} 
\begin{document}
\title[Two divisors of $(n^2+1)/2$ summing up to $\delta n+\varepsilon$]{Two divisors of $(n^2+1)/2$ summing up to $\delta n+\varepsilon$, \newline for $\delta$ and $\varepsilon$ even} 
\author{Sanda Buja\v{c}i\'{c}}
\address{University of Rijeka\newline Department of Mathematics\newline Radmile Matej\v{c}i\'{c} 2\newline
51 000 Rijeka\newline Croatia}
\email{sbujacic@math.uniri.hr}

\begin{abstract}
In this paper we are dealing with the problem of the existence of two divisors of $(n^2+1)/2$ whose sum is equal to $\delta n+\varepsilon$, in the case when $\delta$ and $\varepsilon$ are even, or more precisely in the case in which $\delta\equiv\varepsilon+2\equiv0$ or $2 \pmod{4}$. We will completely solve the cases $\delta=2, \delta=4$ and $\varepsilon=0$.
\end{abstract}

\subjclass[2010]{11D09; 11A55}
\keywords{sum of divisors, continued fractions, Pell's equations}
\maketitle

\section{Introduction}
In \cite{ayad}, Ayad and Luca have proved that there does not exist an odd integer $n>1$ and two positive divisors $d_1, d_2$ of $\frac{n^2+1}{2}$ such that $d_1+d_2=n+1$. In \cite{dujella}, Dujella and Luca have dealt with a more general issue, where $n+1$ was replaced with an arbitrary linear polynomial $\delta n+\varepsilon$,  where $\delta>0$  and $\varepsilon$ are given integers. The reason that $d_1$ and $d_2$ are congruent to $1$ modulo $4$ comes from the fact that $(n^2+1)/2$ is odd and is a sum of two coprime squares $((n+1)/2)^2+((n-1)/2)^2$. Such numbers have the property that all their prime factors are congruent to $1$ modulo $4$.
Since $d_1+d_2=\delta n + \varepsilon$, then there are two cases: it is either $\delta\equiv\varepsilon\equiv1\pmod{2}$, or $\delta\equiv\varepsilon+2\equiv0 \enspace \textrm{or}\enspace 2\pmod{4}$. In \cite{dujella} authors have focused on the first case. 
\newline\indent In this paper, we deal with the second case, the case where $\delta\equiv\varepsilon+2\equiv0 \enspace \textrm{or}\enspace 2\pmod{4}$. We completely solve cases when $\delta=2, \delta=4$ and $\varepsilon=0$. We prove that there exist infinitely many positive odd integers $n$ with the property that there exists a pair of positive divisors $d_1, d_2$ of $\frac{n^2+1}{2}$ such that  $d_1+d_2=2n+\varepsilon$ for $\varepsilon\equiv0\pmod{4}$ and we prove an analoguos result for $\varepsilon\equiv2\pmod{4}$ and divisors $d_1, d_2$ of $\frac{n^2+1}{2}$ such that $d_1+d_2=4n+\varepsilon$. In case when $\delta\geq6$ is a positive integer of the form $\delta=4k+2,\enspace
k\in\mathbb{N}$  we prove that there does not exist an odd integer $n$ such that there exists a pair of divisors $d_1, d_2$ of $\frac{n^2+1}{2}$ with the property $d_1+d_2=\delta n$.
We also prove that there exist infinitely many odd integers $n$ with the property that there exists a pair of positive divisors $d_1, d_2$ of $\frac{n^2+1}{2}$ such that $d_1+d_2=2n$.

\begin{section}{The case $\delta=2$}
\begin{theorem}\label{theo:1}
If $\varepsilon\equiv0\pmod{4}$, then there exist infinitely many positive odd integers $n$ with the property that there exists a pair of positive divisors $d_1, d_2$ of $\frac{n^2+1}{2}$ such that $d_1+d_2=2n+\varepsilon$.
\end{theorem}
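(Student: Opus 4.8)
The plan is to produce the required $n$ not by pinning down a single divisor (which, as the cases $d_1=1$ or $d_1=d_2$ show, yields only finitely many $n$), but by forcing the \emph{product} $d_1d_2$ to be an exact quotient of $N:=(n^2+1)/2$. Concretely, I would look for a positive integer $c$ and positive integers $d_1,d_2$ with
\[
 d_1+d_2=2n+\varepsilon,\qquad d_1 d_2=\frac{N}{c}.
\]
If such $d_1,d_2$ exist then $d_1\mid d_1d_2=N/c\mid N$ and likewise $d_2\mid N$, so they are automatically divisors of $N$; it therefore suffices to realise these two symmetric functions by integers. Thus $d_1,d_2$ are the roots of $x^2-(2n+\varepsilon)x+N/c$, and the whole problem reduces to making the discriminant a perfect square.

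Carrying out the algebra, $(d_1-d_2)^2=(2n+\varepsilon)^2-2(n^2+1)/c$. Clearing the denominator and completing the square in $n$ turns this into a Pell-type equation
\[
 X^2-2c(2c-1)\,y^2=2c(\varepsilon^2+4)-4,
\]
where $y=d_1-d_2$ and $X=2\big((2c-1)n+c\varepsilon\big)$. Since $2c$ and $2c-1$ are coprime and cannot both be squares for $c\ge 1$, the coefficient $2c(2c-1)$ is never a perfect square; hence as soon as this equation has one solution it has infinitely many, generated by the fundamental solution of $u^2-2c(2c-1)v^2=1$.

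The crux is to choose $c$ so that a seed solution is visible. I would pick $c$ to make the right-hand side a perfect square, i.e.\ solve $2c(\varepsilon^2+4)-4=(2w)^2$, equivalently $w^2\equiv -1\pmod{2M}$ with $M=(\varepsilon/2)^2+1$ and $c=(w^2+1)/(2M)$. This is exactly where the hypothesis $\varepsilon\equiv 0\pmod 4$ enters: then $\varepsilon/2$ is even, so $M=(\varepsilon/2)^2+1^2$ is an odd sum of two coprime squares and all its prime factors are $\equiv 1\pmod 4$; hence $-1$ is a quadratic residue modulo $M$, and modulo $2M$, and such a $w$ (and an integer $c\ge1$) exists. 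With this $c$ the pair $(X,y)=(2w,0)$ is a seed solution, corresponding to the degenerate case $d_1=d_2$.

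From the seed I would apply a fixed power of the fundamental unit to produce an infinite sequence $(X_k,y_k)$, and set $n_k=(X_k-2c\varepsilon)/(2(2c-1))$ and $d_1,d_2=\big((2n_k+\varepsilon)\mp y_k\big)/2$. Here $X_k,y_k$ remain even throughout (they start even and the recursion preserves this), so the $d_i$ are integers; and because $d_1d_2=(n_k^2+1)/(2c)$ must then be an integer, $2c\mid n_k^2+1$ forces $n_k$ to be odd automatically, while for large $k$ the $n_k$ and the $d_i$ are positive. The only genuine constraint is integrality of $n_k$, i.e.\ $X_k\equiv 2c\varepsilon\pmod{2(2c-1)}$; the seed already satisfies $2w\equiv 2c\varepsilon$ after fixing the sign of $w$ (since $w^2\equiv(c\varepsilon)^2\pmod{2c-1}$), and because $2c(2c-1)\equiv0\pmod{2(2c-1)}$ the recursion merely multiplies $X_k$ by $u_1$ modulo $2(2c-1)$, so replacing the fundamental unit by a suitable power $T$ with $u_T\equiv1\pmod{2(2c-1)}$ keeps $X_{kT}$ in the correct residue class. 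This congruence bookkeeping, rather than any deep difficulty, is the main obstacle; the essential idea is the residue argument that manufactures the perfect-square seed. Since $d_1 d_2=N/c\mid N$, each such $n_k$ yields the desired pair of divisors, and there are infinitely many of them.
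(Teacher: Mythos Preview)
Your overall strategy coincides with the paper's: write $d_1d_2=(n^2+1)/(2c)$, reduce via $(d_1-d_2)^2=(d_1+d_2)^2-4d_1d_2$ to the Pellian equation
\[
X^2-2c(2c-1)y^2=2c(\varepsilon^2+4)-4,\qquad X=(4c-2)n+2c\varepsilon,\ y=d_1-d_2,
\]
choose $c$ so that the right side is a perfect square, and then let the Pell unit generate infinitely many $n$. The paper does exactly this with the explicit choice $c=\tfrac18\varepsilon^2-\tfrac12\varepsilon+1$; your novelty is the slick quadratic--residue argument that \emph{some} such $c$ exists (because $M=(\varepsilon/2)^2+1$ has all prime factors $\equiv 1\pmod 4$).

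There is, however, a genuine gap in your congruence step. You need the seed $X_0=2w$ to satisfy $X_0\equiv 2c\varepsilon\pmod{2(2c-1)}$, equivalently $w\equiv \varepsilon/2\pmod{2c-1}$. You justify this by ``fixing the sign of $w$, since $w^2\equiv(c\varepsilon)^2\pmod{2c-1}$''. But $w^2\equiv e^2\pmod{2c-1}$ (with $e=\varepsilon/2$) only gives $(2c-1)\mid(w-e)(w+e)$; when $2c-1$ is composite this does \emph{not} force $w\equiv\pm e$. A concrete failure: for $\varepsilon=16$ one may take $w=47$, so $c=(47^2+1)/130=17$ and $2c-1=33$; then $w-e=39$ and $w+e=55$, and $33$ divides neither. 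So for this perfectly legitimate choice of $w$ your seed does not land in the required residue class, and no sign change fixes it.

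The paper sidesteps this by committing to an explicit $c$ (equivalently an explicit $w=e^2-e+1$, for which $2c-1=(e-1)^2$ and $w-e=(e-1)^2$ exactly), and then checking directly that every Pell solution gives integral $n$. Your argument can be repaired the same way: instead of invoking abstract existence of $w$, take $w=(\varepsilon/2)^2-\varepsilon/2+1$ and $c=\tfrac18\varepsilon^2-\tfrac12\varepsilon+1$, and the congruence is immediate. With that single change everything else you wrote (parity preservation, oddness of $n$, positivity for large $k$) goes through, and your proof becomes essentially the paper's.
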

\begin{proof}
Let $\varepsilon\equiv0\pmod{4}$. We want to find a positive odd integer $n$ and positive divisors $d_1, d_2$ of $\frac{n^2+1}{2}$ such that $d_1+d_2=2n+\varepsilon$. 
Let $g=\textrm{gcd}(d_1, d_2)$. We can write $d_1=gd_1', d_2=gd_2'$. Since  $gd_1'd_2'=\textrm{lcm}(d_1, d_2)$ divides $\frac{n^2+1}{2}$, we conclude that there exists a positive integer $d$ such that 
\begin{equation*}
d_1d_2=\frac{g(n^2+1)}{2d}.
\end{equation*}
\indent From the identity 
\begin{equation*}
(d_2-d_1)^2=(d_1+d_2)^2-4d_1d_2,
\end{equation*} we can easily obtain \begin{equation*}(d_2-d_1)^2=(2n+\varepsilon)^2-4\frac{g(n^2+1)}{2d},\end{equation*}
\begin{equation*}(d_2-d_1)^2=4n^2+4\varepsilon n+\varepsilon^2-2\frac{g(n^2+1)}{d},\end{equation*}
\begin{equation*}d(d_2-d_1)^2=4n^2d+4d\varepsilon n+\varepsilon^2d-2n^2g-2g,\end{equation*}
\begin{equation*}d(d_2-d_1)^2=(4d-2g)n^2+4d\varepsilon n+\varepsilon^2d-2g,\end{equation*}
\begin{equation}\label{eq_solve22}
 d(4d-2g)(d_2-d_1)^2=(4d-2g)^2n^2+4(4d-2g)d\varepsilon n+4d^2\varepsilon^2-8dg-2\varepsilon^2dg+4g^2.
\end{equation}
\indent For $X=(4d-2g)n+2d\varepsilon, Y=d_2-d_1,$ the equation $(\ref{eq_solve22})$ becomes
\begin{equation*}X^2-d(4d-2g)Y^2=8dg+2\varepsilon^2dg-4g^2.\end{equation*}
For $g=1$ the previous equation becomes \begin{equation*}X^2-2d(2d-1)Y^2=8d+2\varepsilon^2d-4,\end{equation*}
\begin{equation}\label{eq:solve}
X^2-2d(2d-1)Y^2=2d(4+\varepsilon^2)-4.\end{equation}
The equation $(\ref{eq:solve})$ is a Pellian equation. The right-hand side of $(\ref{eq:solve})$ is nonzero.\newline\newline
\indent Our goal is to make the right-hand side of $(\ref{eq:solve})$ a perfect square. That condition can be satisfied by taking $d=\frac{1}{8}\varepsilon^2-\frac{1}{2}\varepsilon+1$. With this choice of $d$, we get
\begin{equation*}
2d(4+\varepsilon^2)-4=2\left(\frac{1}{8}\varepsilon^2-\frac{1}{2}\varepsilon+1\right)(4+\varepsilon^2)-4=\left(\frac{1}{2}(\varepsilon^2-2\varepsilon+4)\right)^2.
\end{equation*}
\indent Pellian equation $(\ref{eq:solve})$ becomes \begin{equation}\label{eq:solve1}X^2-2d(2d-1)Y^2=\left(\frac{1}{2}(\varepsilon^2-2\varepsilon+4)\right)^2.\end{equation}
Now, like in \cite{dujella}, we are trying to solve $(\ref{eq:solve1})$. We let \begin{equation*}X=\frac{1}{2}(\varepsilon^2-2\varepsilon+4)U,\enspace Y=\frac{1}{2}(\varepsilon^2-2\varepsilon+4)V.\end{equation*}
\noindent The equation $(\ref{eq:solve1})$ becomes
\begin{equation}\label{eq:solve2}
U^2-2d(2d-1)V^2=1.
\end{equation}
\indent Equation $(\ref{eq:solve2})$ is a Pell equation  which has infinitely many positive integer solutions $(U, V)$, and consequently, there exist infinitely many positive integer solutions $(X, Y)$ of $(\ref{eq:solve1})$.
\noindent The least positive integer solution  of $(\ref{eq:solve2})$ can be found using the continued fraction expansion of number $\sqrt{2d(2d-1)}$. \newline\indent We can easily get $\sqrt{2d(2d-1)}=[2d-1;\overline{2, 4d-2}]$. All positive solutions of $(\ref{eq:solve2})$ are given by $(U_m, V_m)$ for some $m\geq0$. The first few solutions are \newline $(U_0, V_0) = (1,0),$ \newline $(U_1, V_1)=(4d-1,2)$,\newline $(U_2, V_2)=(32d^2-16d+1, 16d-4)$, \newline
$(U_3, V_3)=(256d^3-192d^2+36d-1, 128d^2-64d+6),\dots$.\newline\newline
\indent Generally, solutions of $(\ref{eq:solve2})$ are generated by recursive expressions 
\begin{equation*}
U_0=1, \enspace U_1=4d-1,\enspace U_{m+2}=2(4d-1)U_{m+1}-U_m,\end{equation*}
\begin{equation}\label{eq:solve0}V_0=0,\enspace V_1=2,\enspace V_{m+2}=2(4d-1)V_{m+1}-V_m,\enspace m\in\mathbb{N}_0.
\end{equation}
\indent By induction on $m$, one gets that $U_m\equiv1\pmod{(4d-2)}, m\geq0$. Indeed, $U_0=1 \equiv 1\pmod{(4d-2)}, \enspace U_1=4d-1\equiv1\pmod{(4d-2)}$. Assume that $U_m\equiv U_{m-1}\equiv 1\pmod{(4d-2)}$. For $U_{m+1}$ we get \begin{center}
$U_{m+1}=2(4d-1)U_m-U_{m-1}\equiv2-1\equiv1\pmod{(4d-2)}.$\end{center}
Now, it remains to compute the corresponding values of $n$ which arise from \newline $X=(4d-2)n+2d\varepsilon$ and $X=\frac{1}{2}(\varepsilon^2-2\varepsilon+4)U$. We obtain \begin{equation*}n=\frac{\frac{1}{2}(\varepsilon^2-2\varepsilon+4)U-2d\varepsilon}{4d-2}.\end{equation*}
\indent  We want the above number $n$ to be a positive integer.\newline\newline
From $d=\frac{1}{8}\varepsilon^2-\frac{1}{2}\varepsilon+1$, it follows $4d-2=\frac{1}{2}\varepsilon^2-2\varepsilon+2$.
Note that $\varepsilon$ is even. So, congruences
\begin{equation*}
\frac{1}{2}(\varepsilon^2-2\varepsilon+4)U-2d\varepsilon \equiv 4d+\varepsilon-2-2d\varepsilon\equiv -(2d-1)\varepsilon \equiv 0\pmod{(4d-2)},
\end{equation*}
show us that all numbers $n$ generated in the specified way are integers.\newline\newline
\indent The first few values of number $n$, which we get from $U_1, U_2, U_3$, are

 \[
 \left\{ 
  \begin{array}{l l}
   n=\frac{1}{2}(\varepsilon^2-3\varepsilon+6),\\
   d_1=1,\\
   d_2=\varepsilon^2-2\varepsilon+5.\\
  \end{array} \right.
\]
\\
 \[
 \left\{ 
  \begin{array}{l l}
   n=\frac{1}{2}(\varepsilon^4-6\varepsilon^3+20\varepsilon^2-33\varepsilon+34),\\
   d_1=\varepsilon^2-2\varepsilon+5,\\
   d_2=\varepsilon^4-6\varepsilon^3+19\varepsilon^2-30\varepsilon+29.\\
  \end{array} \right.
\]
\\
 \[
 \left\{ 
  \begin{array}{l l}
   n=\frac{1}{2}(\varepsilon^6-10\varepsilon^5+50\varepsilon^4-148\varepsilon^3+281\varepsilon^2-323\varepsilon+198),\\
   d_1=\varepsilon^4-6\varepsilon^3+19\varepsilon^2-30\varepsilon+29,\\
   d_2=\varepsilon^6-10\varepsilon^5+49\varepsilon^4-142\varepsilon^3+262\varepsilon^2-292\varepsilon+169.\\
  \end{array} \right.
\]
\end{proof}
\end{section}
\begin{section}{The case $\delta=4$}
\begin{theorem}\label{theo:2}
If $\varepsilon\equiv2\pmod{4}$, then there exist infinitely many positive odd integers $n$ with the property that there exists a pair of positive divisors $d_1, d_2$ of $\frac{n^2+1}{2}$ such that $d_1+d_2=4n+\varepsilon$.
\end{theorem}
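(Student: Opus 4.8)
The plan is to follow the proof of Theorem~\ref{theo:1} almost verbatim, changing only the linear form $2n+\varepsilon$ to $4n+\varepsilon$ and tracking how this alters the resulting Pellian equation. First I would set $g=\gcd(d_1,d_2)=1$, introduce the positive integer $d$ with $d_1d_2=\frac{n^2+1}{2d}$, and feed $d_1+d_2=4n+\varepsilon$ into the identity $(d_2-d_1)^2=(d_1+d_2)^2-4d_1d_2$. Repeating the completion of the square from (\ref{eq_solve22})--(\ref{eq:solve}) with the coefficient $4$ in place of $2$, and putting $X=(16d-2)n+4\varepsilon d$ and $Y=d_2-d_1$, leads to the Pellian equation \[ X^2-2d(8d-1)Y^2=2d(16+\varepsilon^2)-4. \] This part is routine algebra parallel to the $\delta=2$ case.

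The crux, and the step that genuinely departs from Theorem~\ref{theo:1}, is the choice of $d$ that turns the right-hand side into a perfect square. Writing $\varepsilon=2u$ with $u$ odd (which is exactly the hypothesis $\varepsilon\equiv2\pmod4$), the right-hand side factors as $4\bigl(2d(u^2+4)-1\bigr)$, so it suffices to make $2d(u^2+4)-1$ a square. Here the clean polynomial substitution $d=\frac18\varepsilon^2-\frac12\varepsilon+1$ available for $\delta=2$ has no analogue: one checks that no polynomial $d(\varepsilon)$ makes $2d(u^2+4)-1$ identically a square. Instead I would invoke the arithmetic fact stressed in the introduction. Since $u$ is odd, $u^2+4=u^2+2^2$ is a sum of two coprime squares, so every prime factor of $u^2+4$ is congruent to $1$ modulo $4$; consequently $-1$ is a quadratic residue modulo $2(u^2+4)$. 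Choosing $s$ with $s^2\equiv-1\pmod{2(u^2+4)}$ and setting $d=\frac{s^2+1}{2(u^2+4)}$ gives a positive integer $d$ (and $s$ odd) for which the right-hand side equals $(2s)^2$. I expect this existence step to be the main obstacle, precisely because $d$ can no longer be exhibited by a single formula in $\varepsilon$.

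With the right-hand side equal to $(2s)^2$, the substitution $X=2sU$, $Y=2sV$ reduces the problem to the Pell equation $U^2-2d(8d-1)V^2=1$, which has infinitely many solutions $(U_m,V_m)$ generated, as in (\ref{eq:solve0}), from the continued fraction expansion of $\sqrt{2d(8d-1)}$. From each solution I recover $n=\frac{sU_m-2\varepsilon d}{8d-1}$ together with $d_1,d_2$ determined by $d_1+d_2=4n+\varepsilon$ and $d_2-d_1=2sV_m$; that $d_1d_2=\frac{n^2+1}{2d}$ by construction and $\gcd(d_1,d_2)=1$ then forces $d_1,d_2$ to be divisors of $\frac{n^2+1}{2}$.

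Finally I would verify that infinitely many of these $n$ are positive odd integers. Integrality of $n$ follows from a congruence for $U_m$ modulo $8d-1$, exactly as the induction $U_m\equiv1\pmod{4d-2}$ is used in Theorem~\ref{theo:1}. The oddness of $n$, which is forced by the requirement that $\frac{n^2+1}{2}$ be an integer, is the secondary subtlety: it need not hold for every index $m$, so I would argue that, since $(U_m,V_m)$ is periodic modulo $2(8d-1)$, the indices $m$ yielding a positive odd integer $n$ form a nonempty union of arithmetic progressions (positivity being automatic once $m$ is large), and are therefore infinite; exhibiting one small admissible solution shows that this set is nonempty.
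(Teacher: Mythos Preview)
Your reduction to the Pellian equation
\[
X^2-2d(8d-1)Y^2=2d(16+\varepsilon^2)-4
\]
with $g=1$, $X=(16d-2)n+4d\varepsilon$, $Y=d_2-d_1$ is correct and matches the paper. However, two claims after that point are wrong, and one of them breaks the argument.

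First, the assertion that ``no polynomial $d(\varepsilon)$ makes $2d(u^2+4)-1$ identically a square'' is false on half of the residue classes: for $\varepsilon\equiv 6\pmod 8$ the choice $d=\tfrac{1}{32}\varepsilon^2-\tfrac18\varepsilon+\tfrac58$ is a positive integer and gives $2d(16+\varepsilon^2)-4=\bigl(\tfrac14(\varepsilon^2-2\varepsilon+16)\bigr)^2$. This is exactly how the paper handles that subcase, by the same mechanism as Theorem~\ref{theo:1}.

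Second, and fatally, for $\varepsilon\equiv 2\pmod 8$ your ``arithmetic'' choice of $d$ does not produce integer $n$. You need $n=\dfrac{sU_m-2\varepsilon d}{8d-1}\in\mathbb Z$, and since the Pell recursion forces $U_m\equiv 1\pmod{16d-2}$ for every $m$, this amounts to the fixed congruence $s\equiv 4ud\pmod{8d-1}$. From $s^2=2d(u^2+4)-1$ one only gets $s^2\equiv(4ud)^2\pmod{8d-1}$, which does \emph{not} pin down the sign. Concretely, take $\varepsilon=2$ (so $u=1$): then $u^2+4=5$, $s=3$, $d=1$, $8d-1=7$, and $s\equiv 3\not\equiv 4=4ud\pmod 7$; the Pell solution $(U_1,V_1)=(15,4)$ gives $X=90$ and $n=\tfrac{90-8}{14}=\tfrac{41}{7}$, not an integer. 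Choosing the other sign makes $n$ integral but negative for every $m$, so no positive odd $n$ is produced. The hand-waved ``congruence for $U_m$ modulo $8d-1$, exactly as in Theorem~\ref{theo:1}'' therefore does not go through.

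The paper resolves the subcase $\varepsilon\equiv 2\pmod 8$ by abandoning $g=1$ altogether: it takes $g=d_1=\tfrac14\varepsilon^2+4$ and a quartic $d(\varepsilon)$, exhibits an explicit solution $(X_0,Y_0)$ of the resulting Pellian that satisfies the needed congruence $X_0\equiv 4d\varepsilon\pmod{16d-2g}$, and then propagates this congruence along the Pell orbit. Your setup, which fixes $g=1$ from the outset, cannot reach this family; that is the missing idea.
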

\begin{proof} Proof of this theorem will be slightly different from the proof of Theorem \ref{theo:1}. Instead of assuming that $\varepsilon\equiv2\pmod{4}$, we will distiguish two cases: in one case we will be dealing with $\varepsilon\equiv6\pmod{8}$ and we will apply strategies from \cite{dujella} and in the other case we will be dealing with $\varepsilon\equiv2\pmod{8}$ and we will use different methods in obtaining results.\newline\newline
We start with the case when $\varepsilon\equiv6\pmod{8}$.
We want to find odd positive integers $n$ and positive divisors $d_1, d_2$ of $\frac{n^2+1}{2}$ such that $d_1+d_2=4n+\varepsilon$.\newline\newline
\indent Let $g=\textrm{gcd}(d_1, d_2)$, $d_1=gd_1', d_2=gd_2'$ and  $d$ is a positive integer which satisfies the equation \begin{equation*}
d_1d_2=\frac{g(n^2+1)}{2d}.
\end{equation*}
From the identity 
\begin{equation*}
(d_2-d_1)^2=(d_1+d_2)^2-4d_1d_2,
\end{equation*}
we obtain 
\begin{equation*}
(d_2-d_1)^2=(4n+\varepsilon)^2-4\frac{g(n^2+1)}{2d},
\end{equation*}
\begin{equation*}
d(d_2-d_1)^2=(16d-2g)n^2+8d\varepsilon n+\varepsilon^2d-2g,
\end{equation*}
\begin{equation}\label{eq:solve3}
 d(16d-2g)(d_2-d_1)^2=(16d-2g)^2n^2+8(16d-2g)d\varepsilon n+16d^2\varepsilon^2-32dg-2\varepsilon^2dg+4g^2.
\end{equation}
\indent Let $X=(16d-2g)n+4d\varepsilon, \enspace Y=d_2-d_1.$ Equation $(\ref{eq:solve3})$ becomes
\begin{equation}\label{eq:solve0}
X^2-2d(8d-g)Y^2=32dg+2\varepsilon^2dg-4g^2.
\end{equation}
For $g=1$ the previous expression becomes 
\begin{equation}\label{eq:solve4}
X^2-2d(8d-1)Y^2=2d(16+\varepsilon^2)-4.
\end{equation}
It is obvious that $(\ref{eq:solve4})$ is a Pellian equation. The right-hand side of  $(\ref{eq:solve4})$ is nonzero.\newline\newline
\indent Our goal is to make the right-hand side of $(\ref{eq:solve4})$ a perfect square. That condition can be satisfied by taking $d=\frac{1}{32}\varepsilon^2-\frac{1}{8}\varepsilon+\frac{5}{8}$. With this choice for $d$, we get
\begin{equation*}
2d(16+\varepsilon^2)-4=2\left(\frac{1}{32}\varepsilon^2-\frac{1}{8}\varepsilon+\frac{5}{8}\right)(16+\varepsilon^2)-4=\left(\frac{1}{4}(\varepsilon^2-2\varepsilon+16)\right)^2.
\end{equation*}
So, Pellian equation $(\ref{eq:solve4})$ becomes \begin{equation}\label{equation1}
X^2-2d(8d-1)Y^2=\left(\frac{1}{4}(\varepsilon^2-2\varepsilon+16)\right)^2.\end{equation}
Let  
\begin{equation*}
X=\frac{1}{4}(\varepsilon^2-2\varepsilon+16)W, \enspace Y=\frac{1}{4}(\varepsilon^2-2\varepsilon+16)Z.
\end{equation*}
The equation $(\ref{equation1})$ becomes \begin{equation}\label{eq:solve5}
W^2-2d(8d-1)Z^2=1.
\end{equation}
\indent The equation (\ref{eq:solve5}) is a Pell equation which has infinitely many positive integer solutions $(W, Z)$, and consequently, there exist infinitely many positive integer solutions $(X, Y)$ of $(\ref{equation1})$.
\noindent The least positive integer solution of $(\ref{eq:solve5})$ can be found using the continued fraction expansion of number $\sqrt{2d(8d-1)}$. \newline\indent We can easily get 
\begin{equation*}
\sqrt{2d(8d-1)}=[4d-1;\overline{1, 2, 1, 8d-2}].
\end{equation*}
All positive solutions of $(\ref{eq:solve5})$ are given by $(W_m, Z_m)$ for some $m\geq0$. The first few solutions are\newline $(W_0, Z_0) = (1,0),$ \newline $(W_1, Z_1) = (16d-1,4)$,\newline $(W_2, Z_2) = (512d^2-64d+1, 128d-8), \dots$.\newline\indent Generally, solutions of $(\ref{eq:solve5})$ are generated by recursive expressions \begin{equation*}
W_0=1, \enspace W_1=16d-1,\enspace W_{m+2}=2(16d-1)W_{m+1}-W_m,
\end{equation*}
\begin{equation*}
Z_0=0,\enspace Z_1=4,\enspace Z_{m+2}=2(16d-1)Z_{m+1}-Z_m,\enspace m\in\mathbb{N}_0.
\end{equation*}
\indent By induction on $m$, one gets that $W_m\equiv1\pmod{(16d-2)}, m\geq0$. Indeed, $W_0=1 \equiv 1\pmod{(16d-2)}$, $W_1=16d-1\equiv1\pmod{(16d-2)}$.
Assume that $W_{m}\equiv W_{m-1}\equiv 1\pmod{(16d-2)}$. For $W_{m+1}$ we get
\begin{equation*}
W_{m+1}=2(16d-1)W_m-W_{m-1}\equiv 2-1 \equiv1 \pmod{(16d-2)}.
\end{equation*}
\indent Now, it remains to compute the corresponding values of $n$ which arise from\newline $X=(16d-2)n+4d\varepsilon$ and $X=\frac{1}{4}(\varepsilon^2-2\varepsilon+16)W$. We obtain \begin{equation*}
n=\frac{\frac{1}{4}(\varepsilon^2-2\varepsilon+16)W-4d\varepsilon}{16d-2}.
\end{equation*}
\indent We want to prove that number $n$ is a positive integer.\newline\newline
From $d=\frac{1}{32}\varepsilon^2-\frac{1}{8}\varepsilon+\frac{5}{8}$,it follows $8d-1=\frac{1}{4}\varepsilon^2-\varepsilon+4$. Number $\frac{\varepsilon}{2}$ is an odd integer. Thus, the congruences  
\begin{equation*}
\frac{1}{4}(\varepsilon^2-2\varepsilon+16)W-4d\varepsilon\equiv 8d-1+\frac{\varepsilon}{2}-4d\varepsilon \equiv(8d-1)(1-\frac{\varepsilon}{2})\equiv0\pmod{(16d-2)}
\end{equation*}
\noindent show us that all numbers $n$ generated in the specified way are integers.
\newline\newline\indent The first few values of number $n$, which we get from $W_1, W_2, W_3$, are
 \[
 \left\{ 
  \begin{array}{l l}
   n=\frac{1}{4}(\varepsilon^2-3\varepsilon+18),\\
   d_1=1\\
   d_2=\varepsilon^2-2\varepsilon+17.\\
  \end{array} \right.
\]
\\
 \[
 \left\{ 
  \begin{array}{l l}
   n=\frac{1}{4}(\varepsilon^4-6\varepsilon^3+44\varepsilon^2-105\varepsilon+322),\\
   d_1=\varepsilon^2-2\varepsilon+17,\\
   d_2=\varepsilon^4-6\varepsilon^3+43\varepsilon^2-102\varepsilon+305.\\
  \end{array} \right.
\]
\\
 \[
 \left\{ 
  \begin{array}{l l}
   n=\frac{1}{4}(\varepsilon^6-10\varepsilon^5+86\varepsilon^4-388\varepsilon^3+1529\varepsilon^2-3155\varepsilon+5778),\\
   d_1=\varepsilon^4-6\varepsilon^3+43\varepsilon^2-102\varepsilon+305,\\
   d_2=\varepsilon^6-10\varepsilon^5+85\varepsilon^4-382\varepsilon^3+1486\varepsilon^2-3052\varepsilon+5473.\\
  \end{array} \right.
\]\newline\newline
\indent Now, we deal with the case when $\varepsilon\equiv2\pmod{8}$. Let $\varepsilon=8k+2, \enspace k\in\mathbb{N}_0$. For $g=\frac{1}{4}\varepsilon^2+4$ and $g=d_1$, the equation $(\ref{eq:solve0})$ becomes
\begin{equation*}
X^2-2d(8d-g)Y^2=\frac{2d-1}{4}\varepsilon^4+8\varepsilon^2(2d-1)+64(2d-1).
\end{equation*}
The right-hand side of the equation will be a perfect square if $2d-1$ is a perfect square. Motivated by the experimental data, we take 
\begin{equation*}
d=\frac{1}{512}\varepsilon^4-\frac{1}{64}\varepsilon^3+\frac{7}{64}\varepsilon^2-\frac{5}{16}\varepsilon+\frac{41}{32}.
\end{equation*}
We get
\begin{equation*}
2d-1=16k^4+8k^2+1=(4k^2+1)^2.
\end{equation*}
So, the equation $(\ref{eq:solve0})$ becomes
\begin{equation}\label{eq:02}
X^2-2d(8d-g)Y^2=\left(\frac{1}{32}(\varepsilon^2+16)(\varepsilon^2-4\varepsilon+20)\right)^2.
\end{equation}
We consider the corresponding Pell equation
\begin{equation}\label{eq:03}
U^2-2d(8d-g)V^2=1.
\end{equation}
Let $(U_0, V_0)$ be the least positive integer solution of $(\ref{eq:03})$. That equation has infinitely many solutions. From $(\ref{eq:03})$ we get that 
\begin{equation*}
U^2\equiv1\pmod{(16d-2g)}.
\end{equation*} 
We deal with the case where $g=d_1=\frac{1}{4}\varepsilon^2+4$ and from the experimental data we can set 
\begin{equation*}
d_2=d_1^2-16kd_1, \enspace k\in\mathbb{N}_0.\end{equation*}
For $Y=d_2-d_1$ we get 
\begin{equation*}
Y=\left(\frac{1}{4}\varepsilon^2+4\right)^2-(2\varepsilon-3)\left(\frac{1}{4}\varepsilon^2+4\right)=\frac{\varepsilon^4}{16}-\frac{\varepsilon^3}{2}+\frac{11\varepsilon^2}{4}-8\varepsilon+28.
\end{equation*}
From $(\ref{eq:02})$, we obtain:
\begin{equation*}
X=\frac{(\varepsilon^2+16)(\varepsilon^6-16\varepsilon^5+140\varepsilon^4-768\varepsilon^3+3120\varepsilon^2-8704\varepsilon+14400)}{2048}.
\end{equation*}
We claim that $X$ satisfies the congruence \begin{equation}\label{eq:solvecon}
X\equiv4d\varepsilon\pmod{(16d-2g)}.
\end{equation} 
Indeed,
\begin{equation*}
16d-2g=\frac{\varepsilon^4}{32}-\frac{\varepsilon^3}{4}+\frac{5\varepsilon^2}{4}-5\varepsilon+\frac{25}{2},
\end{equation*}
\begin{equation*}
X-4d\varepsilon=\left(\frac{\varepsilon^4}{32}-\frac{\varepsilon^3}{4}+\frac{5\varepsilon^2}{4}-5\varepsilon+\frac{25}{2}\right)\left(\frac{\varepsilon^4}{64}-\frac{\varepsilon^3}{8}+\frac{13\varepsilon^2}{16}-\frac{9\varepsilon}{4}+9\right).\end{equation*}
From $n=\frac{X-4d\varepsilon}{16d-2g}$, we get 
\begin{equation*}
n=\frac{\varepsilon^4}{64}-\frac{\varepsilon^3}{8}+\frac{13\varepsilon^2}{16}-\frac{9\varepsilon}{4}+9=64k^4+28k^2+7,
\end{equation*}
and we see that $n$ is an odd integer.
Thus, if we define 
\begin{equation*}
\left(X_0, Y_0\right)
=\bigg(\frac{(\varepsilon^2+16)(\varepsilon^6-16\varepsilon^5+140\varepsilon^4-768\varepsilon^3+3120\varepsilon^2-8704\varepsilon+14400)}{2048},
\end{equation*}
\begin{equation*}
\frac{1}{16}(\varepsilon^2+16)(\varepsilon^2-8\varepsilon+28) \bigg),
\end{equation*}
we see that $(X_0, Y_0)$ is a solution of $(\ref{eq:02})$ which satisfies the congruence $(\ref{eq:solvecon})$.
\noindent We have proved that for every $\varepsilon\equiv2\pmod{8}$ there exists at least one odd integer $n$ which satisfies the conditions of Theorem \ref{theo:2}. Our goal is to prove that there exist infinitely many such integers $n$ that satisfy the properties of Theorem \ref{theo:2}. \newline\newline
If $(X_0, Y_0)$ is a solution of $(\ref{eq:02})$, solutions of $(\ref{eq:02})$ are also
\begin{equation}\label{eq:06}
(X_i, Y_i)=\left(X_0+\sqrt{2d(8d-g)}Y_0\right)\left(U_0+\sqrt{2d(8d-g)}V_0\right)^{2i}, \enspace i=0, 1, 2, \dots
\end{equation} 
From the equation $(\ref{eq:06})$, we get
\begin{equation*}
X_i\equiv U_0^{2i}X_0\equiv X_0\equiv4d\varepsilon\pmod{(16d-2g)}.
\end{equation*}
So, there are infinitely many solutions $(X_i, Y_i)$ of $(\ref{eq:02})$ that satisfy the congruence $(\ref{eq:solvecon})$. Therefore, by 
\begin{equation*}
n=\frac{X_i-4d\varepsilon}{16d-2g},
\end{equation*}
we get infinitely many integers $n$ with the required properties.
It is easy to see that number $n$ defined in this way
is odd. Indeed, we have $16d-2g\equiv2 \pmod{4}$,
$X_0\equiv2 \pmod{4}$, and since $(\ref{eq:03})$ implies that $U_0$ is odd and $V_0$ is even,
we get from $(\ref{eq:solvecon})$ that
\begin{equation*}
X_i-4d\varepsilon\equiv X_i\equiv U_0^{2i}X_0 \equiv X_0\equiv2 \pmod{4},
\end{equation*}
so $n$ is odd.
\end{proof}
\end{section}

\begin{section}{The case $\varepsilon=0$}
\begin{proposition}\label{prop:1}
There exist infinitely many positive odd integers $n$ with the property that there exists a pair of positive divisors $d_1, d_2$ of $\frac{n^2+1}{2}$ such that $d_1+d_2=2n$. These solutions satisfy $\textnormal{gcd}(d_1, d_2)=1$ \textrm{and} $d_1d_2=\frac{n^2+1}{2}$.
\end{proposition}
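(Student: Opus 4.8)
The plan is to treat Proposition~\ref{prop:1} as the extremal case $\varepsilon=0$ of Theorem~\ref{theo:1}, in which the choice $d=\frac18\varepsilon^2-\frac12\varepsilon+1$ degenerates to $d=1$ and therefore forces $d_1d_2=\frac{n^2+1}{2}$ exactly. Rather than rerun that machinery, I would argue directly. Requiring $d_1+d_2=2n$ together with $d_1d_2=\frac{n^2+1}{2}$ means that $d_1,d_2$ are the two roots $n\pm\sqrt{(n^2-1)/2}$ of $t^2-2nt+\frac{n^2+1}{2}=0$; both desired relations then hold by construction, and the whole problem reduces to arranging that these roots are positive integers.

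The roots are integral precisely when $(n^2-1)/2$ is a perfect square, say $(n^2-1)/2=m^2$, which is exactly the Pell equation $n^2-2m^2=1$. I would then invoke its standard theory: it has infinitely many solutions, generated from the fundamental solution $(n,m)=(3,2)$, and since $n^2=1+2m^2$ is odd, every such $n$ is automatically odd, so no separate parity induction is needed. For each solution with $n>1$ I set $d_1=n-m$ and $d_2=n+m$; these are positive (because $n>m$), they sum to $2n$, and their product is $n^2-m^2=\frac{n^2+1}{2}$, so in particular each of $d_1,d_2$ divides $\frac{n^2+1}{2}$ and is a genuine divisor. This already produces infinitely many odd $n$ carrying the required divisor pair, with $d_1d_2=\frac{n^2+1}{2}$.

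The remaining assertion $\gcd(d_1,d_2)=1$ is the only step carrying real arithmetic content. Suppose a prime $p$ divides $g=\gcd(d_1,d_2)$. Then $p\mid d_1+d_2=2n$ and $p\mid d_1d_2=\frac{n^2+1}{2}$; since $\frac{n^2+1}{2}$ is odd, $p$ is odd, whence $p\mid n$, so $p\mid n^2$ and also $p\mid n^2+1$, forcing $p\mid 1$, a contradiction. Equivalently, $g\mid\gcd\!\big(2n,\tfrac{n^2+1}{2}\big)=1$, using $\gcd(n,n^2+1)=1$ and the oddness of $\frac{n^2+1}{2}$.

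I do not expect a genuine obstacle: the construction is a textbook Pell equation, and the divisor and coprimality properties drop out of the identities $d_1+d_2=2n$ and $d_1d_2=\frac{n^2+1}{2}$. As a sanity check I would list the first instances $(n,d_1,d_2)=(3,1,5),\,(17,5,29),\,(99,29,169),\dots$, which coincide with the $\varepsilon=0$ specialization of the examples computed in the proof of Theorem~\ref{theo:1}.
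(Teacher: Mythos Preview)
Your construction is clean and correct: the Pell equation $n^2-2m^2=1$ produces infinitely many odd $n$, and setting $d_1=n-m$, $d_2=n+m$ gives positive divisors of $(n^2+1)/2$ summing to $2n$, with $d_1d_2=(n^2+1)/2$ and $\gcd(d_1,d_2)=1$. So the existence claim and the two side conditions are verified for your family.

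The gap is in the scope of the second sentence. As the paper proves it---and, crucially, as it is \emph{used} in the proof of Theorem~\ref{theo:3}---the assertion is that \emph{every} triple $(n,d_1,d_2)$ with $d_i\mid (n^2+1)/2$ and $d_1+d_2=2n$ must satisfy $\gcd(d_1,d_2)=1$ and $d_1d_2=(n^2+1)/2$; in Theorem~\ref{theo:3} an arbitrary such pair $(d_1'',d_2'')$ is handed to Proposition~\ref{prop:1} and both conclusions are invoked. Your argument only checks these conditions for the pairs you built, not for an arbitrary solution. The paper proceeds structurally: it first shows $g=\gcd(d_1,d_2)=1$ for \emph{any} solution (your coprimality argument adapts to this once you replace ``$p\mid d_1d_2=(n^2+1)/2$'' by ``$p\mid d_1\mid (n^2+1)/2$''), then writes $d_1d_2=(n^2+1)/(2d)$ for some positive integer $d$, reduces to $(2d-1)n^2-2dy^2=1$, and applies the Grelak--Grytczuk criterion (Lemma~\ref{lemma}) to show there are no solutions when $d>1$, forcing $d=1$ and hence $d_1d_2=(n^2+1)/2$. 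That elimination of $d>1$ is the piece your argument lacks, and without it the proposition cannot feed into Theorem~\ref{theo:3}.
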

\begin{proof}
 We want to find a positive odd integer $n$ and positive divisors $d_1, d_2$ of $\frac{n^2+1}{2}$ such that $d_1+d_2=2n$. Let $g=\textrm{gcd}(d_1, d_2)$.  Then $ g|(2n)$ and $g|(n^2+1)$ which implies that $g|((2n)^2+4)$ so we can conclude that $g|4$. Because $g$ is the greatest common divisor of $d_1, d_2$ and $d_1, d_2$ are odd numbers, we can also conclude that $g$ is an odd number. So, $g=1$. Like we did in the proofs of the previous theorems, we define a positive integer $d$ which satisfies the equation $d_1d_2=\frac{n^2+1}{2d}.$ From the identity
\begin{equation*}
(d_2-d_1)^2=(d_1+d_2)^2-4d_1d_2,
\end{equation*}
we can easily obtain
\begin{equation*}
(d_2-d_1)^2=(2n)^2-2\frac{(n^2+1)}{d},
\end{equation*}
\begin{equation*}
d(d_2-d_1)^2=4n^2d-2n^2-2.
\end{equation*}
Let $d_2-d_1=2y$, so we get
\begin{equation*}
(4d-2)n^2-4dy^2=2,
\end{equation*}
\begin{equation}\label{eq:solve6}
(2d-1)n^2-2dy^2=1.
\end{equation}
\indent We will use the next lemma, which is Criterion 1 from \cite{grelak} to check if there exists a solution for (\ref{eq:solve6}).
\begin{lemma}\label{lemma}
Let $a>1$, $b$ be positive integers such that $\textrm{gcd}(a, b)=1$ and $D=ab$ is not a perfect square. Moreover, let $(u_0, v_0)$ denote the least positive integer solution of the Pell equation 
\begin{equation*}
u^2-Dv^2=1.
\end{equation*}
\noindent Then equation $ax^2-by^2=1$ has a solution in positive integers $x, y$ if and only if 
\begin{equation*}2a|(u_0+1) \enspace \textrm{and} \enspace 2b|(u_0-1).\end{equation*}\end{lemma}
\begin{flushright}
$\square$
\end{flushright}
We want to solve the Pell equation
\begin{equation}\label{eq:solve7}
U^2-2d(2d-1)V^2=1,
\end{equation}
where $n=U,\enspace y=V$. The continued fraction expansion of the number $\sqrt{2d(2d-1)}$ is already known from Theorem $\ref{theo:1}$ where we have obtained 
\begin{equation*}
\sqrt{2d(2d-1)}=[2d-1; \overline{2, 4d-2}].
\end{equation*} 
\indent The least positive integer solution of the Pell equation $(\ref{eq:solve7})$ is $(4d-1, 2)$. In our case, we want to find solutions of (\ref{eq:solve6}), so we apply Lemma \ref{lemma} which gives us conditions that have to be fulfilled. It has to be that
\begin{equation*}
2(2d-1)|4d \enspace\textrm{and} \enspace 4d|(4d-2),
\end{equation*} 
which is not true for $d\in\mathbb{N}$. So, for Pellian equation $(\ref{eq:solve6})$
there are no integer solutions $(n, y)$ when $a=2d-1>1$. Finally, we have to check the remaining case for $a=1$, which is the case that is not included in Lemma \ref{lemma}.\newline\newline
\indent If $a=2d-1=1$, then $d=1$.  From $(\ref{eq:solve6})$  and $d=1$, we get the Pell equation
\begin{equation}\label{eq:solve8}
n^2-2y^2=1,
\end{equation}
which has infinitely many solutions $n=U_m,\enspace y=V_m,\enspace m\in\mathbb{N}_0$ where
\begin{equation*}
U_0=1,\enspace U_1=3,\enspace U_{m+2}=6 U_{m+1}-U_m,
\end{equation*}
\begin{equation*}
V_0=0,\enspace V_1=2,\enspace V_{m+2}=6 V_{m+1}-V_m,\enspace m\in\mathbb{N}_0.
\end{equation*}
\indent The first few values $(U_i, V_i)$ are\newline
\newline $(U_0, V_0) = (1,0),$  $(U_1, V_1)=(3,2)$, $(U_2, V_2)=(17, 12)$,  $(U_3, V_3)=(99, 70), \dots$.\newline\newline
\indent From those solutions we can easily generate $(n, d_1, d_2)$
\begin{equation*}
(n, d_1, d_2)=(3, 1, 5),\enspace (17, 5, 29),\enspace (99, 29, 169), \dots.\end{equation*}
\indent  We have proved that there exist infinitely many odd positive integers $n$ with the property that there exists a pair of positive divisors $d_1, d_2$ of $\frac{n^2+1}{2}$ such that $d_1+d_2=2n$. We have also proved that $g=1$ and $d=1$,  so we conclude that numbers $d_1$ and $d_2$ are coprime and that $d_1d_2=\frac{n^2+1}{2}$.
\end{proof}
\begin{theorem}\label{theo:3}
Let $\delta\geq6$ be a positive integer such that $\delta=4k+2,
k\in\mathbb{N}$. Then there does not exist a positive odd integer $n$ with the property that there exists a pair of positive divisors $d_1, d_2$ of $\frac{n^2+1}{2}$ such that $d_1+d_2=\delta n$.
\end{theorem}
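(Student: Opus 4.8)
The plan is to argue by contradiction, reducing the existence of such $n,d_1,d_2$ to the solvability of a Pell-like equation $AX^2-BY^2=1$ that Lemma \ref{lemma} will forbid. Write $\delta=2\mu$ with $\mu=2k+1\geq3$ odd. Suppose $n$ is odd and $d_1,d_2\mid\frac{n^2+1}{2}$ satisfy $d_1+d_2=\delta n$. As in the previous proofs set $g=\gcd(d_1,d_2)$; since every divisor of $\frac{n^2+1}{2}$ is $\equiv1\pmod4$, the integer $g$ is odd, and $g\mid n^2+1$ together with $\gcd(n,n^2+1)=1$ gives $\gcd(g,n)=1$. From $g\mid d_1+d_2=\delta n$ I then obtain $g\mid\delta$, and as $g$ is odd, $g\mid\mu$. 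Writing $d_i=g a_i$ with $\gcd(a_1,a_2)=1$ and each $a_i$ odd, and setting $\nu=\mu/g$, this gives $a_1+a_2=2\nu n$.

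Next I introduce, as before, the positive integer $d$ with $d_1 d_2=\frac{g(n^2+1)}{2d}$, so that with $m:=dg$ one has $n^2+1=2m\,a_1 a_2$. Expanding $(a_1-a_2)^2=(a_1+a_2)^2-4a_1 a_2$, substituting $a_1+a_2=2\nu n$ and $a_1 a_2=\frac{n^2+1}{2m}$, and writing $a_1-a_2=2r$ (even, since $a_1,a_2$ are odd), a short computation yields
\[
A n^2-B r^2=1,\qquad A:=2\nu^2 m-1,\quad B:=2m.
\]
Here $A\equiv-1\pmod B$, so $\gcd(A,B)=1$, and $A>1$ because $\nu^2 m\geq3$: if $\nu=1$ then $g=\mu\geq3$ forces $m=dg\geq3$, while if $\nu\geq3$ then $\nu^2\geq9$. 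Moreover $AB=2m(2\nu^2 m-1)$ lies strictly between $(2\nu m-1)^2$ and $(2\nu m)^2$, hence is not a perfect square, so Lemma \ref{lemma} applies with $a=A$, $b=B$. (Note also that $a_1=a_2$ is impossible by coprimality unless it forces $\delta=2$, so $r$ is genuinely a positive integer.)

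To invoke the lemma I must exhibit the least positive solution $(u_0,v_0)$ of $u^2-AB\,v^2=1$, and here lies the main computational step: determining the continued fraction expansion of $\sqrt{AB}$ for a general odd $\nu$. I expect that when $\nu=1$ the expansion is the one already found in Theorem \ref{theo:1}, namely $[2m-1;\overline{2,4m-2}]$, and that for $\nu\geq3$ it takes the purely periodic shape $[2\nu m-1;\overline{1,\,2\nu-2,\,1,\,4\nu m-2}]$, of period $4$. In either case the fundamental solution should come out to $(u_0,v_0)=(4\nu^2 m-1,\,2\nu)$, which I will confirm directly from the identity $(4\nu^2 m-1)^2-AB\,(2\nu)^2=1$.

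Finally I apply Lemma \ref{lemma}: a positive solution of $A n^2-B r^2=1$ exists if and only if $2A\mid(u_0+1)$ and $2B\mid(u_0-1)$. But $2A=4\nu^2 m-2$ and $u_0+1=4\nu^2 m$, so $2A\mid(u_0+1)$ would force $4\nu^2 m-2\mid2$, which is impossible since $4\nu^2 m-2\geq10$. Hence $A n^2-B r^2=1$ has no positive solution, contradicting the existence of $n,d_1,d_2$, and the theorem follows. The only genuinely laborious point is the continued fraction computation for $\nu\geq3$; everything else is bookkeeping. The hypothesis $\delta\geq6$ enters exactly where needed: it is what rules out the degenerate case $\nu^2 m=1$ (equivalently $A=1$), which is precisely the escape that produced the infinite families in the cases $\delta=2$.
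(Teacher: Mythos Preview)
Your argument is correct and reaches the same Pell-type obstruction as the paper, but the reduction step is genuinely different. The paper first forces $g=\gcd(d_1,d_2)=1$ by a minimal-counterexample argument: it shows $g\mid\delta^2$, peels off a common prime $p$ to get a smaller $\delta''$, and invokes Proposition~\ref{prop:1} to dispose of the residual case $\delta''=2$. Only then does it arrive at $(2\delta'^2 d-1)n^2-2dy^2=1$ with $\delta'=\delta/2\geq3$, computes the continued fraction of $\sqrt{2d(2\delta'^2 d-1)}$ (period $4$), and kills it with Lemma~\ref{lemma} via the condition $4d\mid(u_0-1)$.

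You instead prove the sharper fact $g\mid\mu$ directly (from $g\mid\delta n$, $\gcd(g,n)=1$, $g$ odd) and carry $g$ along, passing to $\nu=\mu/g$ and $m=dg$. This yields the same equation $(2\nu^2 m-1)n^2-2mr^2=1$ without ever needing Proposition~\ref{prop:1} or a minimality argument; the price is that $\nu=1$ can now occur, forcing you to split the continued-fraction computation into the period-$2$ case (already done in Theorem~\ref{theo:1}) and the period-$4$ case $\nu\geq3$ (identical to the paper's computation with $\delta'\to\nu$, $d\to m$). Your observation that $\nu^2 m\geq3$ in all cases is exactly what replaces the paper's hypothesis $\delta'\geq3$ and makes Lemma~\ref{lemma} applicable; you then rule out solvability via $2A\mid(u_0+1)$, whereas the paper uses the companion condition $2B\mid(u_0-1)$---either works. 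One small point: verifying the identity $(4\nu^2 m-1)^2-AB(2\nu)^2=1$ shows only that this is \emph{a} solution; minimality really does require the continued-fraction expansion (or an equivalent bound), so that step is not optional.
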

\begin{proof}
Suppose on the contrary that this is not so and let the number $\delta$ be the smallest positive integer $\delta=4k+2,\enspace k\in\mathbb{N}$ for which there exists an odd integer $n$ and a pair of positive divisors $d_1, d_2$ of $\frac{n^2+1}{2}$ such that $d_1+d_2=\delta n$. Let $g=\textrm{gcd}(d_1, d_2)>1$. Since $d_1=gd_1', \enspace d_2=gd_2'$, it follows that  $g|(n^2+1)$ and $g|(\delta n)$ and we conclude that $g|((\delta n)^2+\delta^2)$, which implies that $g|\delta^2$. This means that $g$ and $\delta$ have a common prime factor $p$. Let $d_1=pd_1'', d_2=pd_2'', \delta=p\delta''$. Then, we have $pd_1''+pd_2''=p\delta'' n,$ so we can conclude $d_1''+d_2''=\delta'' n$ where $d_1'', d_2''$ are divisors of $\frac{n^2+1}{2}$. It is clear that $\delta''<\delta$ and if it also satisfies $\delta''\neq2$, the existence of the number $\delta''$ contradicts the minimality of $\delta$. So, if $\delta''\neq2$, then we must have $g=1$.\newline\indent  If $\delta''=2$, it follows from Proposition \ref{prop:1} that $\textrm{gcd}(d_1'', d_2'')=1$ and $d_1''d_2''=\frac{n^2+1}{2}$. But, $\textrm{gcd}(d_1, d_2)=pd_1''d_2''$ should be a divisor of $\frac{n^2+1}{2}$ which is not possible because $p>1$. So, in this case we also conclude that $g=1$.\newline
\indent From the identity 
\begin{equation*}
(d_2-d_1)^2=(d_1+d_2)^2-4d_1d_2,
\end{equation*}
and using $g=1$, we obtain
\begin{equation*}
(d_2-d_1)^2=(\delta n)^2-2\frac{(n^2+1)}{d},
\end{equation*}
\begin{equation*}
d(d_2-d_1)^2=\delta^2n^2d-2n^2-2,
\end{equation*}
\begin{equation*}
d(d_2-d_1)^2=(d\delta^2-2)n^2-2.
\end{equation*}
In the equation 
\begin{equation*}
(\delta^2d-2)n^2-d(d_2-d_1)^2=2,
\end{equation*}
we set $(d_2-d_1)=2y$ (number $d_2-d_1$ is an even number because $d_1, d_2$ are odd integers), and we get \begin{equation*}
(\delta^2d-2)n^2-4dy^2=2.
\end{equation*}
\indent If we divide both sides of the above equation by $2$, then it becomes 
\begin{equation*}
(2d(2k+1)^2-1)n^2-2dy^2=1.
\end{equation*}
Now, if we define $\delta'=\frac{\delta}{2}=2k+1$, we get
\begin{equation}\label{eq:solve9}
(2\delta'^2d-1)n^2-2dy^2=1.
\end{equation}
We will prove by applying Lemma \ref{lemma} that the above Pell equation $(\ref{eq:solve9})$ has no solutions. \newline\newline
\indent To be able to apply Lemma \ref{lemma}, we have to deal with an equation of the form 
\begin{equation*}
x^2-Dy^2=1.
\end{equation*}
We have $a=2d\delta'^2-1$, $a>1$ (because $\delta'\geq3$) and $D=ab=2d(2\delta'^2d-1)$ is not a perfect square because $2d(2\delta'^2d-1)\equiv2\pmod{4}$. We need to find the least positive integer solution of the equation
\begin{equation}\label{eq:solve10}
u^2-2d(2\delta'^2d-1)v^2=1.
\end{equation}
\indent For that purpose we find the continued fraction expansion of the number 
\begin{equation*}
\sqrt{2d(2\delta'^2d-1)}, \enspace\delta'\geq3.
\end{equation*}
We know that
\begin{equation*}
\sqrt{2d(2\delta'^2d-1)}=[a_0; \overline{a_1, a_2, \dots, a_{l-1}, 2a_0}],
\end{equation*} 
where we calculate numbers $a_i$ recursively
\begin{equation*}
a_i=\left\lfloor\frac{s_i+a_0}{t_i}\right\rfloor, \enspace s_{i+1}=a_it_i-s_i, \enspace t_{i+1}=\frac{d-s_{i+1}^2}{t_i}.
\end{equation*}
\indent In our case, we obtain
\begin{equation*}
a_0=\lfloor\sqrt{2d(2\delta'^2d-1)}\rfloor=2d\delta'-1, \enspace s_0=0,\enspace t_0=1;
\end{equation*}
\begin{equation*}
s_1=2d\delta'-1, \enspace t_1=4d\delta'-2d-1, \enspace a_1=1;
\end{equation*}
\begin{equation*}
s_2=2d\delta'-2d, \enspace t_2=2d, \enspace a_2=2\delta'-2;
\end{equation*}
\begin{equation*}
s_3=2d\delta'-2d, \enspace t_3=4d\delta'-2d-1, \enspace a_3=1;
\end{equation*}
\begin{equation*}
s_4=2d\delta'-1, \enspace t_4=1, \enspace a_4=2(2d\delta'-1)=2a_0.
\end{equation*}
\indent We get 
\begin{equation*}
\sqrt{2d(2\delta'^2d-1)}=[2d\delta'-1;\overline{1,2\delta'-2,1,2(2d\delta'-1)}].
\end{equation*}
Now, we can find the least positive integer solution of the equation $(\ref{eq:solve10})$.
Because the length of the period of the expansion is $l=4$, the least positive integer solution of $(\ref{eq:solve10})$ is $(p_3, q_3)$, where numbers $p_i, q_i,\enspace i=0,1,2,3$ are calculated recursively
\begin{equation*}
p_0=a_0, \enspace p_1=a_0a_1+1, \enspace p_k=a_kp_{k-1}+p_{k-2},
\end{equation*}
\begin{equation*}
q_0=1,\enspace q_1=a_1, \enspace q_k=a_kq_{k-1}+q_{k-2},\enspace k=2,3.
\end{equation*}
\indent We obtain
\begin{equation*}
(p_0, q_0)=(2d\delta'-1, 1), \enspace
(p_1, q_1)=(2d\delta', 1),\enspace
(p_2, q_2)=(4\delta'^2d-2d\delta'-1, 2\delta'-1),
\end{equation*}
\begin{equation*}
(p_3, q_3)=(4\delta'^2d-1, 2\delta').
\end{equation*}
\noindent So, the least positive integer solution is $(p_3, q_3)=(u_0, v_0)=(4\delta'^2d-1, 2\delta')$ and we apply Lemma \ref{lemma}.\newline\newline
\indent In our case we have $a=2\delta'^2d-1, \enspace b=2d$. From Lemma \ref{lemma} we get
\begin{equation*}(4\delta'^2d-2)|4\delta'^2d, \enspace 4d|(4\delta'^2d-2).\end{equation*}
\noindent We can easily see that $4d|(4\delta'^2d-2)$ if and only if $4d|2$ which is not possible because $d\in\mathbb{N}$. So, the equation $(\ref{eq:solve9})$ has no solutions. We have proved that there does not exist a positive odd integer $n$ with the property that there exists a pair of positive divisors  $d_1, d_2$ of $\frac{n^2+1}{2}$ such that $d_1+d_2=\delta n$.
\end{proof}
\section*{Acknowledgement} 
We would like to thank Professor Andrej Dujella for many valuable suggestions and a great help with the preparation of this article.
\end{section}

\end{document}